\newtheorem{thm}{Theorem}[section]
\newtheorem{lem}[thm]{Lemma}
\newtheorem{prop}[thm]{Proposition}
\newtheorem{defn}[thm]{Definition}
\def\co{{\mathcal O}}
\def\oqmm13{\co_q(M_{1,3})}
\def\oqm23{\co_q(M_{2,3})}
\title[]{Shirshov's theorem and division rings\\
that are left algebraic over a subfield}
\author{Jason P.~Bell}
\thanks{The first-named author was supported by NSERC grant 31-611456.}
\address{
Department of Mathematics\\
Simon Fraser University\\
Burnaby, BC V5A 1S6\\
Canada}
\email{jpb@math.sfu.ca}
\email{ysharifi@sfu.ca}
\author{Vesselin Drensky}
\address{Institute of Mathematics and Informatics\\
Bulgarian Academy of Sciences\\
Sofia, Bulgaria}
\email{drensky@math.bas.bg}
\subjclass[2000]{}
\author{Yaghoub Sharifi}
\keywords{division rings, algebraic division rings, left algebraicity, Kurosch's problem, Levitzki's conjecture,
Shirshov's theorem, algebras with polynomial identity}
\subjclass[2010]{16K40, 16K20, 16R20, 05A05, 05E15}
\begin{document}
\bibliographystyle{plain}

\begin{abstract}  Let $D$ be a division ring.  We say that $D$ is \emph{left algebraic} over a (not necessarily central)
subfield $K$ of $D$ if every $x\in D$ satisfies a polynomial equation $x^n + \alpha_{n-1}x^{n-1}+\cdots +\alpha_0=0$
with $\alpha_0,\ldots ,\alpha_{n-1}\in K$.  We show that if $D$ is a division ring that is left algebraic over
a subfield $K$ of bounded degree $d$ then $D$ is at most $d^2$-dimensional over its center.
This generalizes a result of Kaplansky. For the proof we give a new version of the combinatorial theorem of Shirshov
that sufficiently long words over a finite alphabet contain either a $q$-decomposable subword or a high power of a non-trivial subword.
We show that if the word does not contain high powers then the factors in the $q$-decomposition may be chosen
to be of almost the same length.
We conclude by giving a list of problems for algebras that are left algebraic over a commutative subring.
\end{abstract}
\maketitle

\section{Introduction}

Kurosch \cite{Ku}, see also \cite[Problem 6.2.6]{R2} asked whether or not an algebra that is both finitely generated and algebraic over a field $k$
is necessarily finite-dimensional over $k$.  Kurosch's problem is a ring-theoretic analogue of Burnside's problem for groups
and was shown to have a negative answer by Golod and Shafarevich \cite{GS}.  In fact, Golod \cite{G} used their construction to give
an example of a finitely generated infinite group $G$ with the property that every element in $G$ has finite order,
giving a negative answer to  Burnside's problem.  As Rowen \cite[p. 116]{R2} points out, there are two special cases
of Kurosch's problem: the case that the algebra we consider is a division ring and the case that it is a nil ring.

Many examples of bizarre finitely generated algebraic algebras that are not finite-dimensional over their base fields now exist
\cite{BS, Sm1, Sm2, Sm3, LS}.  The strangest of these examples are due to Smoktunowicz, who showed a simple nil algebra (without $1$)
exists \cite{Sm2} and that there is a nil algebra (without $1$) with the property that the polynomial ring over it contains
a free algebra on two generators \cite{Sm3}.
Lenagan and Smoktunowicz \cite{LS} also showed that there is a finitely generated nil algebra (without $1$) that is infinite-dimensional
over its base field but has finite Gelfand-Kirillov dimension \cite{LS}.  Despite the large number of pathological examples of nil rings,
there are no similar pathological examples of algebraic division rings.
At the moment, all known examples of algebraic division rings have the property that every finitely generated subalgebra
is finite-dimensional over its center.

Kaplansky \cite{K} considered algebraic algebras that have the stronger property that there is a natural number $d$
such that every element of the algebra is algebraic of degree at most $d$.  With this stronger property,
one avoids the pathologies that arise when one considers algebras that are algebraic.

\begin{thm} {\rm (Kaplansky and Jacobson\footnote{In a footnote, Kaplansky thanks Jacobson for a key part of the proof.} \cite[Theorem 4]{K})}
A primitive algebraic algebra of bounded degree is finite-dimensional over its center.
\end{thm}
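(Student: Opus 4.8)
The plan is to pass from the primitive algebra $A$, algebraic of bounded degree $d$ over its central field $k$, to a concrete matrix description via the Jacobson density theorem, and then to pin down the underlying division ring. By density, $A$ acts faithfully and densely on a vector space $V$ over the division ring $\Delta := \mathrm{End}_A(V)$, and $k$ maps into the center of $\Delta$. First I would bound $\dim_\Delta V$: if there were $\Delta$-independent vectors $v_0, \dots, v_d$, density produces $a \in A$ with $a v_i = v_{i+1}$ for $i < d$ and $a v_d = 0$, and then the minimal polynomial $\sum_{j=0}^{d} c_j t^j$ of $a$ (which has degree at most $d$ and coefficients $c_j \in k$) applied to $v_0$ yields $\sum_j c_j v_j = 0$, forcing every $c_j = 0$ — a contradiction, since $\Delta$-independence gives $k$-independence. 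Hence $\dim_\Delta V = n \le d$, and density upgrades to an isomorphism $A \cong M_n(\Delta)$. In particular $A$ is simple Artinian, its center is $Z := Z(\Delta) \supseteq k$, and $\Delta$, embedded in $A$, is itself a division ring in which every element is algebraic over $Z$ of degree at most $d$.

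It remains to prove that such a $\Delta$ is finite-dimensional over $Z$. The natural idea is to repeat the density argument after extending scalars to a maximal subfield $L$ of $\Delta$: one checks that $L$ is self-centralizing, $C_\Delta(L) = L$, so that $\Delta^{\mathrm{op}} \otimes_Z L$ acts on $\Delta$ (viewed as a left $L$-space, with $\Delta^{\mathrm{op}}$ acting by right multiplication and $L$ by left multiplication) as a dense ring of $L$-linear transformations with commutant exactly $L$; in particular its image is primitive. The obstruction is that the bounded-degree hypothesis is \emph{not} stable under the base change $-\otimes_Z L$. I would circumvent this by first invoking Kaplansky's lemma that an algebra of bounded degree $d$ satisfies a polynomial identity — concretely the standard identity $s_{2d}$, obtained from the dependence of $1, x, \dots, x^d$ by a multilinearization (Vandermonde) argument. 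Unlike bounded degree, polynomial identities survive base change and pass to quotients, so the primitive image of $\Delta^{\mathrm{op}} \otimes_Z L$ still satisfies $s_{2d}$. Running the density/matrix-unit argument again, now with the Amitsur--Levitzki bound that $M_N(L)$ satisfies no identity of degree below $2N$, forces $\dim_L \Delta < \infty$; combined with $[L:Z] \le d$ this gives $\dim_Z \Delta = [L:Z]\,\dim_L \Delta < \infty$.

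To finish and extract the sharp bound, write $\dim_Z \Delta = m^2$ with $m = [L:Z] \le d$. A generator of a maximal subfield $E$ of $A \cong M_n(\Delta)$ is algebraic over $Z$ of degree $[E:Z] = nm$, so the hypothesis forces $nm \le d$ and therefore $\dim_Z A = n^2 m^2 = (nm)^2 \le d^2$; in particular $A$ is finite-dimensional over its center, as claimed. The step I expect to be the crux is exactly the finiteness of $\Delta$ over $Z$: the clean density trick of the first paragraph uses the bounded-degree hypothesis directly, but that hypothesis collapses under base change, and it is the replacement of \emph{bounded degree} by \emph{polynomial identity} — stable under extension of scalars and therefore usable after tensoring with a maximal subfield — that carries the argument through. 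This is presumably the point at which Jacobson's contribution enters.
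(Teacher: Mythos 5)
The paper does not prove this theorem: it is quoted as background from Kaplansky's paper \cite{K}, so there is no in-paper proof to compare against. Your argument is essentially a reconstruction of Kaplansky's original route (density theorem to get $A\cong M_n(\Delta)$ with $n\le d$; bounded degree $\Rightarrow$ PI; maximal subfield plus density again to make $\Delta$ finite over its center), and its overall architecture is sound. It is also the natural companion to the paper's own Theorem 1.3, whose endgame (separable maximal subfield, primitive element, degree bound) matches your last paragraph.

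Two steps are stated more strongly than you can justify, though both are repairable. First, an algebraic algebra of bounded degree $d$ is not known a priori to satisfy $s_{2d}$; what the multilinearization of the dependence of $1,x,\dots,x^d$ actually yields is the Capelli-type identity $\sum_{\sigma\in S_{\{0,\dots,d\}}}{\rm sign}(\sigma)\,x^{\sigma(0)}y_1x^{\sigma(1)}\cdots y_dx^{\sigma(d)}=0$ (asserting $s_{2d}$ at this stage essentially presupposes the conclusion). This is harmless: any multilinear identity survives $-\otimes_Z L$ and passage to quotients, and Amitsur--Levitzki then still forces $\dim_L\Delta<\infty$; your sharp bound $d^2$ is extracted independently at the end via a primitive element of a separable maximal subfield of $A$ (which exists by K\"othe--Jacobson, the same fact the paper cites from \cite{J}), so no constant from the identity is needed. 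Second, the inequality $[L:Z]\le d$ for a maximal subfield $L$ of $\Delta$ is unjustified and cannot be deduced from the bounded-degree hypothesis alone: a field extension all of whose elements have degree $\le d$ can be infinite-dimensional when it is inseparable (e.g.\ $\mathbb{F}_p(t_1^{1/p},t_2^{1/p},\dots)$ over $\mathbb{F}_p(t_1,t_2,\dots)$), and a maximal subfield need not be separable. Since you use $[L:Z]<\infty$ to pass from $\dim_L\Delta<\infty$ to $\dim_Z\Delta<\infty$, this is a genuine gap as written. The standard repair: $\Delta^{\rm op}\otimes_ZL$ is simple (a central simple algebra tensored over its center with a field remains simple, with no finiteness hypothesis), so the dense surjection onto ${\rm End}_L(\Delta)\cong M_r(L)$ is an isomorphism, giving $\dim_Z\Delta=r^2<\infty$ directly. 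With these two adjustments your proof is complete and correct.
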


In fact, a primitive algebra that is finite-dimensional over its center is a matrix ring over a division ring by the Artin-Wedderburn theorem.
We consider an analogue of Kaplansky's result for division rings that are \emph{left algebraic} over some subfield.

\begin{defn}
{\em Let $A$ be a ring and let $B$ be a subring of $A$ such that $A$ is a free left $B$-module.
We say that $A$ is \emph{left algebraic} over $B$ if for every $a\in A$ there is some natural number $n$
and some elements $\alpha_0,\ldots ,\alpha_n\in B$ such that $\alpha_n$ is regular and
\[
\sum_{j=0}^n \alpha_j a^j \ = \ 0.
\]
}
\end{defn}

The left algebraic property has been used by the first-named author and Rogalski in investigating the existence
of free subalgebras of division rings \cite{Bell}.
We are able to give an analogue of Kaplansky's result in which we replace the algebraic property with being left algebraic over a subfield.

\begin{thm}
Let $d$ be a natural number, let $D$ be a division ring with center $Z(D)$, and let $K$ be a (not necessarily central) subfield of $D$.
If $D$ is left algebraic of bounded degree $d$ over $K$ then $[D:Z(D)]\le d^2$. \label{thm: main}
\end{thm}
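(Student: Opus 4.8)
The plan is to deduce the dimension bound from the theory of polynomial identities: I would first show that the hypothesis forces $D$ to satisfy a polynomial identity of degree at most $2d$, and then invoke Kaplansky's structure theorem for primitive PI-rings. A division ring is primitive, since it acts faithfully and irreducibly on itself, so if $D$ satisfies a polynomial identity whose minimal degree is $m$, then Kaplansky's theorem gives that $D$ is finite-dimensional over its center with $[D:Z(D)]\le \lfloor m/2\rfloor^2$. An identity of degree $2d$ forces $m\le 2d$, hence $\lfloor m/2\rfloor\le d$ and $[D:Z(D)]\le d^2$. The whole problem is thereby reduced to a single statement: \emph{a division ring that is left algebraic of bounded degree $d$ over a subfield $K$ satisfies a polynomial identity of degree $\le 2d$}, for instance the standard identity $s_{2d}$.

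For that reduction I would use the left-algebraic relations as a rewriting system on words. Fix finitely many elements $a_1,\dots,a_m\in D$ and consider the left $K$-span of all monomials in the $a_i$. The bounded-degree hypothesis says every $x\in D$ satisfies $x^{n}=-\sum_{j<n}\alpha_j x^{j}$ with $\alpha_j\in K$ and $n\le d$; hence the left $K$-span of $\{1,x,x^2,\dots\}$ has dimension at most $d$, and, read as a rule, any sufficiently high power of a fixed element rewrites as a left $K$-combination of lower powers. This is exactly the input demanded by the combinatorial theorem established earlier in the paper: a sufficiently long word over $\{1,\dots,m\}$ either contains a high power $v^{k}$ of a nonempty subword $v$, or is $q$-decomposable. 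In the first case the rewriting rule lowers the power; in the second the $q$-decomposition, a subword $u_1\cdots u_q$ all of whose nontrivial block-permutations are lexicographically smaller, is precisely the combinatorial shadow of the alternation of $s_q$, and lets one rewrite the word in terms of lexicographically smaller words of the same multidegree.

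Iterating these two reductions, I would argue by contradiction: if $D$ did \emph{not} satisfy $s_{2d}$, so that $s_{2d}(a_1,\dots,a_{2d})\ne 0$ for some choice of the $a_i$, then the decomposition reductions are obstructed and the only surviving mechanism is the appearance of high powers, from which one expects to manufacture a single element $x$ whose powers $1,x,\dots,x^{d}$ are left $K$-linearly independent, contradicting left algebraicity of degree $d$. The place where the refinement proved in this paper is indispensable is the bookkeeping of degrees: to land on the \emph{sharp} bound $d^2$ one must produce an identity of degree exactly $2d$, so the factors $u_1,\dots,u_q$ coming from the decomposition cannot be allowed to have wildly different lengths. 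This is the reason one needs the strengthened Shirshov theorem, in which, when high powers are absent, the $q$-decomposition can be chosen with factors of almost equal length.

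The step I expect to be the main obstacle is the interface between the combinatorics and the \emph{noncentrality} of $K$. When a high power $v^{k}$ sits in the interior of a word, rewriting it introduces $K$-scalars in the middle of the product, and because $K$ is not central these scalars cannot be transported to the front to keep the expression inside a left $K$-span, so the naive reduction breaks down. Overcoming this, by exploiting the equal-length refinement to force the reductions into positions (for example prefixes) where the $K$-coefficients can be absorbed, and by tracking those coefficients carefully so the degree count survives, is the technical heart of the argument and is exactly what separates the left-algebraic noncentral setting from Kaplansky's original central one.
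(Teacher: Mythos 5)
Your overall architecture --- use the combinatorial theorem to force a polynomial identity, then use PI structure theory --- matches the paper's first half, but the quantitative claim on which your whole bound rests is a genuine gap. You assert that the rewriting argument yields an identity of degree at most $2d$, so that Kaplansky's theorem immediately gives $[D:Z(D)]\le d^2$. The combinatorial argument does not produce anything close to that. What it actually gives (this is Lemma \ref{locPI}) is that for an $m$-generated subalgebra $A$, the left $K$-space $KA$ is spanned by the words of length at most $N=N(m,d,d)$, where $N$ is the (enormous) Shirshov bound; hence $A$ embeds in $n\times n$ matrices over $K$ with $n\le 1+m+\cdots+m^N$ and satisfies $s_{2n}$. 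The degree of the identity obtained this way is astronomically larger than $2d$, and no amount of care with the lengths of the $u_i$ repairs this: proving directly that $D$ satisfies $s_{2d}$ is essentially equivalent to the theorem itself, and your sketch gives no mechanism for it. Relatedly, you misattribute the role of the strengthened Shirshov theorem: the condition $(d-1)\,\mathrm{length}(u_i)<\mathrm{length}(u_1\cdots u_d)$ is needed so that the lower-order terms $u_{i_1}\cdots u_{i_k}$ ($k<d$) arising from expanding $u_S^d$ are \emph{shorter} than $u_1\cdots u_d$ and hence smaller in the degree-lex order, which is what makes the induction close; it has nothing to do with sharpness of the final bound. Also, the noncentrality issue you flag is handled not by ``forcing reductions into prefix positions'' but by the observation that conjugation by $w_1$ is an automorphism, so $D$ is left algebraic over $w_1^{-1}Kw_1$ and the relation $u^d=w_1^{-1}\alpha_{d-1}w_1u^{d-1}+\cdots$ pushes the coefficients to the far left after multiplying by $w_1$.

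The missing piece is the entire second half of the paper's proof, which is where the sharp bound $d^2$ actually comes from. After establishing (via Posner's theorem, Albert's two-generator theorem, and a local-to-global step) that $D$ is finite-dimensional over its center $k$ with $[D:k]=n^2$, the paper runs a separate linear-algebra argument: assume $K$ maximal, so $[D:K]=n$; pick a \emph{separable} maximal subfield $L=k(x)$; show the right-multiplication operators $\mathrm{id},T,\dots,T^{n-1}$ (where $T(\alpha)=\alpha x$) are left $K$-independent using faithfulness of $D$ as a $D\otimes_k L$-module; and then use the structure theorem for finitely generated torsion modules over the PID $K[X]$ to produce a cyclic vector $y$, so that $K+Kyx+\cdots+Kyx^{n-1}$ is direct. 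Setting $u=yxy^{-1}$ gives an element with $1,u,\dots,u^{n-1}$ left $K$-independent, whence $n\le d$. Your proposal does gesture at ``manufacturing a single element whose powers are independent,'' but you propose to extract it from the high-power branch of the combinatorics, which is the wrong source; that element comes from module theory over $K[X]$ applied after finite-dimensionality is known, and the separability of $L$ (not of $K$) is what makes the argument work in the inseparable case.
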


We note that the bound of $d^2$ in the conclusion of the statement of Theorem \ref{thm: main} is the best possible.
For example, if one takes $D$ to be a cyclic division algebra that is $d^2$-dimensional over its center and we take $K$ to be the
center of $D$ then every element of $D$ is algebraic of degree at most $d$ over $K$.  The fact that $K$ is not necessarily central
complicates matters and as a result our proof is considerably different from Kaplansky's proof.
We rely heavily on combinatorial results on semigroups due to Shirshov \cite{Sh}.
Usually Shirshov's theorem is applied to finitely generated PI-algebras $R$. It gives that the sufficiently long words on
the generators contain either a $q$-decomposable subword or a high power of a non-trivial subword.
The existence of a multilinear polynomial identity
replaces the $q$-decomposable subword with a linear combination of words which are lower in the degree lexicographic order
and the algebra $R$ is spanned by words which behave like monomials in a finite number of commuting variables.
Here we go in the opposite direction and use the $q$-decomposable words to deduce that the algebra is PI.
For our purposes we establish a new version of Shirshov's theorem which states that the factors in the $q$-decomposition may be chosen
to be of almost the same length.
Using these combinatorial results, we are able to prove that every finitely generated subalgebra of $D$ satisfies a polynomial identity.
Then we use classical results of structure theory of PI-algebras to complete the proof of our main result.

\section{Combinatorics on words}

In this section, we recall some of the basic facts from combinatorics on words and use them to give a strengthening of Shirshov's theorem.

Let $M$ be the free monoid consisting of all words over a finite alphabet  $\{x_1,\ldots ,x_m\}$.  We put a degree lexicographic order
on all words in $M$ by declaring that
\[
x_1\succ x_2\succ \cdots \succ x_m.
\]
Given a word $w\in M$ and a natural number $q$, we say that $w$ is $q$-\emph{decomposable} if there exist $w_1,\ldots ,w_q\in M$ such that
$w=w_1w_2\cdots w_q$ and for all permutations $\sigma\in S_q$ with $\sigma\neq {\rm id}$ we have
\[
w_1w_2\cdots w_q\succ w_{\sigma(1)}w_{\sigma(2)}\cdots w_{\sigma(q)}.
\]
If in addition, we can choose $w_1,\ldots ,w_q$ such that $(q-1){\rm length}(w_i)<{\rm length}(w)$
for all $i\in \{1,\ldots , q\}$, we say that $w$ is \emph{strongly} $q$-\emph{decomposable}.
Shirshov  proved the following famous combinatorial theorem.

\begin{thm}
{\rm (Shirshov, \cite{Sh}, see also \cite[Lemma 4.2.5]{R})} Let $m$, $p$, and $q$ be natural numbers and
let $M$ be the free monoid generated by $m$ elements $x_1,\ldots ,x_m$.
Then there exists a positive integer $N(m,p,q)$, depending on $m$, $p$, and $q$, such that every word on $x_1,\ldots ,x_m$
of length greater than $N(m,p,q)$ has either a $q$-decomposable subword or has a non-trivial subword of the form $w^p$.
\label{Shirshov}
\end{thm}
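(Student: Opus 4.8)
The plan is to prove the contrapositive formulation: fix $m,p,q$, let $W$ be a word over $\{x_1,\ldots,x_m\}$ that contains no nontrivial subword of the form $w^p$, and show that as soon as $\mathrm{length}(W)$ exceeds a suitable bound $N(m,p,q)$ it must contain a $q$-decomposable subword. I work throughout with the degree-lexicographic order, noting that all rearrangements $w_{\sigma(1)}\cdots w_{\sigma(q)}$ of a fixed factorization share the same total length, so on them the degree-lexicographic order coincides with the ordinary lexicographic order and the decomposability condition is purely lexicographic. The conceptual key is an order-theoretic characterization of decomposability: writing $u^{\infty}$ for the infinite periodic word $uuu\cdots$, a factorization $W=B_1\cdots B_q$ realizes the identity arrangement as the strict lexicographic maximum over all permutations of the blocks exactly when $B_1^{\infty}\succ B_2^{\infty}\succ\cdots\succ B_q^{\infty}$. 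I would record this equivalence first, since it reduces the entire problem to producing $q$ consecutive blocks whose periodic extensions strictly decrease; the direction from such a chain to decomposability is immediate, and the converse follows by specializing to adjacent transpositions.

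The engine for producing such blocks is a counting argument driven by the no-$p$-power hypothesis. For a fixed window length, associate to each starting position the window (or suffix marker) beginning there; since there are only finitely many words of a given length, a long word forces many markers to coincide. But two equal markers occurring close together generate a periodic segment and hence a forbidden subword $w^p$, so the hypothesis forces the markers, read along $W$, to realize many distinct values spread out in the lexicographic order. Applying an Erd\H{o}s--Szekeres / Dilworth-type extraction to this sequence of markers yields a long subsequence of positions along which the associated periodic words are strictly monotone, and the analysis is arranged, using power-avoidance, so that a strictly decreasing chain of the length required by the characterization above survives.

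Assembling the decomposition, I would take the segments of $W$ between consecutive markers of the decreasing chain as the blocks $B_1,\ldots,B_q$ and check that $B_1^{\infty}\succ\cdots\succ B_q^{\infty}$, so that $B_1\cdots B_q$ is the desired $q$-decomposable subword. The two points needing genuine care, and where I expect the main obstacle, are: first, that strictness must hold against \emph{every} non-identity permutation and not merely against adjacent transpositions, which is precisely why one compares the periodic extensions $u^{\infty}$ rather than the finite blocks; and second, the quantitative bookkeeping that converts the desired chain length $q$ backward through the extraction and the pigeonhole/no-power counting into an explicit value of $N(m,p,q)$. The most transparent way to control the bound is an induction on the alphabet size $m$: the case $m=1$ is immediate, since any word of length at least $p$ over one letter is itself a $p$-th power, while the inductive step uses a fixed letter as a delimiter and feeds the induction hypothesis, applied to the intervening subwords over the remaining $m-1$ letters, into the extraction. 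I expect the interaction between the power-avoidance counting and the Ramsey extraction to be the step demanding the most care in pinning down a clean, explicit $N(m,p,q)$.
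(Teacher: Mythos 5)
Your overall architecture is sound in two respects: the reduction of $q$-decomposability of a factorization $B_1\cdots B_q$ to a strictly decreasing chain $B_1^{\infty}\succ\cdots\succ B_q^{\infty}$ of periodic extensions is a correct and standard lemma (and you rightly note that checking only adjacent transpositions of the finite blocks would not suffice), and the observation that avoiding $p$-th powers forces nearby length-$N$ windows to be pairwise distinct, hence forces the number of distinct windows to grow, is exactly the subword-complexity argument the paper also uses. The gap is in the extraction step. Having produced $q$ distinct markers $w_1\succ w_2\succ\cdots\succ w_q$, you must find occurrences of them \emph{in that decreasing order at increasing positions} of $W$, so that the segments between consecutive occurrences can serve as $B_1,\ldots,B_q$. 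An Erd\H{o}s--Szekeres or Dilworth extraction applied to the sequence of windows only guarantees a long monotone chain in \emph{one} of the two directions, and the increasing direction is useless here: if the markers increase along the positions, the identity arrangement of the resulting blocks is the lexicographic \emph{minimum} over permutations rather than the maximum, so the subword obtained is not $q$-decomposable under the stated definition (and this cannot be repaired by reversing the word or the alphabet order, since the definition fixes both). Your sentence ``the analysis is arranged, using power-avoidance, so that a strictly decreasing chain\ldots survives'' is precisely the assertion that needs proof, and nothing in the proposal supplies it.

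For comparison, the paper does not prove Theorem \ref{Shirshov} directly (it is quoted from Shirshov and from Rowen's book); it proves the stronger Theorem \ref{Shirshov2}, and the mechanism there is exactly the step you are missing: via K\"onig's lemma and Furstenberg's theorem one passes to a right-infinite \emph{uniformly recurrent} word, in which each of the chosen subwords $w_1\succ\cdots\succ w_q$ occurs in \emph{every} window of some fixed length $L$; this is what permits placing an occurrence of $w_1$ first, then $w_2$ in a later window, and so on, yielding the decreasing chain at increasing positions (and, with the spacing $2Lq$, even the ``strong'' length condition). If you wish to keep a finitary, quantitative framework with an explicit $N(m,p,q)$, the delimiter-letter induction on $m$ that you mention only as bookkeeping at the end must instead become the actual engine producing the decreasing chain, as in Shirshov's original argument; as written, the proposal has no substitute for the uniform-recurrence step.
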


By following the proof of Pirillo \cite{P}, we are able to give a strengthened version of Shirshov's theorem.
We first give some of the basic background from combinatorics on words.

Let $\Sigma=\{x_1,...,x_d\}$ be a finite alphabet.  We say that $w$ is a \emph{right infinite} word over
the alphabet $\Sigma$ if there is some map $f:\mathbb{N}\to \Sigma$ such that
\[
w=f(1)f(2)f(3)\cdots .
\]
We say that $v$ is a subword of the right infinite word $w$ if there exist natural numbers $i$ and $j$ with $i<j$ such that
\[
v=f(i)f(i+1)\cdots f(j).
\]
We say that the right infinite word $w$ is \emph{uniformly recurrent} if for each subword $v$ of $w$ there exists some natural number $N=N(v)$
such that for each $i\ge 1$ we have
$f(i)f(i+1)\cdots f(i+N)$ contains $v$ as a subword.  Given $i<j$, we let $v[i,j]$ denote the subword of $v$ that starts
at position $i$ and ends at position $j$.

We recall two classical results in the theory of combinatorics of words.
The first one is a consequence of K\"onig's infinity lemma in graph theory \cite{Ko} which
gives a sufficient condition for an infinite graph to have an infinitely long path, see e.g. \cite[p. 28, Exercise 41]{AS}.

\begin{thm} {\rm (K\"onig)} Let $\Sigma$ be a finite alphabet and let $S$ be an infinite subset of the free monoid $\Sigma^*$
generated by $\Sigma$.
Then there is a right infinite word $w$ over $\Sigma$ such that every subword of $w$
is a subword of some word in $S$.
\end{thm}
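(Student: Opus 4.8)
The plan is to deduce this from K\"onig's infinity lemma by organizing the relevant finite words into a finitely branching tree. Call a finite word $u\in\Sigma^*$ \emph{admissible} if $u$ is a subword of some word in $S$, and let $T$ be the set of all admissible words. I would make $T$ into a rooted tree by declaring the empty word $\varepsilon$ to be the root and joining each nonempty admissible word $u=u'a$ (with $a\in\Sigma$) to its parent $u'$. The first thing to check is that this is genuinely a tree, i.e.\ that $T$ is closed under taking prefixes: if $u'a$ is a subword of some $v\in S$, then the very same occurrence exhibits $u'$ as a subword of $v$, so $u'$ is admissible. Each node then has at most $|\Sigma|$ children, so $T$ is finitely branching.

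Next I would argue that $T$ is infinite. Since $\Sigma$ is finite there are only finitely many words of length at most $n$ for each $n$, so the infinitude of $S$ forces $S$ to contain words of arbitrarily large length. Every word of length $\ell$ is a subword of itself, so $T$ contains admissible words of every length and is in particular infinite. K\"onig's infinity lemma now applies to the rooted, finitely branching, infinite tree $T$ and produces an infinite path $\varepsilon=u_0,u_1,u_2,\dots$ with $u_{n}=u_{n-1}a_n$ for letters $a_n\in\Sigma$. Setting $f(n)=a_n$ defines a right infinite word $w=f(1)f(2)f(3)\cdots$ whose length-$n$ prefix is exactly the admissible word $u_n$.

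Finally I would verify the conclusion. Any subword $w[i,j]=f(i)\cdots f(j)$ of $w$ is a subword of the prefix $u_j=f(1)\cdots f(j)$, which is admissible; hence $w[i,j]$ is a subword of some word in $S$, as required. The argument is essentially bookkeeping, and the only genuine input is K\"onig's infinity lemma; the one point that needs care is the reduction from \emph{all} subwords of $w$ to merely its \emph{prefixes}, which is precisely what makes the tree structure (and hence the lemma) available. I do not expect any serious obstacle beyond setting up this correspondence cleanly.
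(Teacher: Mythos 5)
Your proof is correct, and it follows exactly the route the paper indicates: the paper gives no proof of this statement but cites it as a consequence of K\"onig's infinity lemma, which is precisely the tree-of-prefix-closed-admissible-words argument you carry out (prefix-closure of the set of factors of words in $S$, finite branching, infinitude from the unbounded lengths in $S$, and the reduction of arbitrary factors of $w$ to its prefixes).
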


\begin{thm} {\rm (Furstenberg \cite{F}, see also \cite[p. 337, Exercise 22]{AS})}
Let $\Sigma$ be a finite alphabet and let $w$ be a right infinite word over $\Sigma$.
Then there is a right infinite uniformly recurrent word $u$ over $\Sigma$ such that every subword of $u$ is also a subword of $w$.
\end{thm}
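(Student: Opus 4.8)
The plan is to deduce the statement from the topological dynamics of the one-sided shift. I would work in the space $X = \Sigma^{\mathbb{N}}$ of all right infinite words, equipped with the product topology in which $\Sigma$ is discrete; since $\Sigma$ is finite, $X$ is compact by Tychonoff's theorem, and a basic clopen neighbourhood of a word is the cylinder $C_v = \{y \in X : y \text{ begins with } v\}$ cut out by a finite prefix $v$. Let $T : X \to X$ be the shift $T(f(1)f(2)\cdots) = f(2)f(3)\cdots$, which is continuous, and call a set $Z \subseteq X$ \emph{invariant} if $T(Z) \subseteq Z$. The orbit closure $Y_0 = \overline{\{T^n w : n \geq 0\}}$ is nonempty, closed and invariant. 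Among all nonempty closed invariant subsets of $Y_0$, ordered by reverse inclusion, every chain has a nonempty closed invariant intersection (nonemptiness coming from the finite intersection property in a compact space), so Zorn's lemma produces a minimal such set $Y$. I would then fix any $u \in Y$ and verify the two required properties.

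The crux is to prove that every point of the minimal set $Y$ is uniformly recurrent; this is where the real work lies. Fix a subword $v$ of $u$ and set $V = C_v \cap Y$, a nonempty relatively open subset of $Y$ since $v$ occurs in $u$ and $u \in Y$. First, using minimality, I would show $Y = \bigcup_{n \geq 0} T^{-n}(V)$: the complement $Z = Y \setminus \bigcup_{n \geq 0} T^{-n}(V)$ is closed and satisfies $T(Z) \subseteq Z$, so by minimality it is empty, it being a proper subset because $u \in V$. Thus every point of $Y$ eventually enters $V$. Next I would invoke compactness: the open cover $\{T^{-n}(V)\}_{n \geq 0}$ of $Y$ admits a finite subcover, yielding an integer $L$ with $Y = \bigcup_{n=0}^{L} T^{-n}(V)$. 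Applying this to $T^{k}u$ for each $k \geq 0$ shows that the set of occurrence positions $\{p \geq 1 : T^{p-1}u \in C_v\}$ of $v$ in $u$ has bounded gaps; taking $N = L + \mathrm{length}(v)$ then guarantees that $v$ occurs inside every window $f(i)\cdots f(i+N)$, which is exactly the definition of uniform recurrence. The main obstacle is precisely this passage from minimality to syndetic (bounded-gap) return times, combining the invariance argument with the compactness argument.

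Finally I would check that every subword of $u$ is a subword of $w$. If $v = u[a, a+\ell-1]$ is a subword of $u$, then $T^{a-1} u$ lies in the open cylinder $C_v$; since $u \in Y \subseteq Y_0$ and $Y_0$ is invariant, $T^{a-1} u$ lies in the orbit closure of $w$, so the neighbourhood $C_v$ must contain some $T^{n} w$ with $n \geq 0$. But $T^{n} w \in C_v$ means that $w$ has $v$ as the subword beginning at position $n+1$, so $v$ is a subword of $w$. This produces a uniformly recurrent $u$ all of whose subwords occur in $w$, completing the argument.
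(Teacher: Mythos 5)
The paper does not actually prove this statement: it is quoted as a classical theorem, with pointers to Furstenberg's book and to Allouche--Shallit, so there is no internal proof to compare against. Your argument is the standard topological-dynamics proof of exactly this result (minimal subsystems of the one-sided shift): realize $w$ as a point of the compact space $\Sigma^{\mathbb{N}}$, use Zorn's lemma to extract a minimal nonempty closed shift-invariant subset $Y$ of the orbit closure of $w$, and prove that every point of a minimal set is uniformly recurrent by combining the ``the set of points that never enter $V$ is closed, invariant and proper, hence empty'' argument with compactness to make the return times to $V$ syndetic. All the steps check out: the orbit closure is forward invariant, chains of nonempty closed invariant sets have nonempty closed invariant intersection by the finite intersection property, the passage from $Y=\bigcup_{n\le L}T^{-n}(V)$ to an occurrence of $v$ in every window of length $L+\mathrm{length}(v)$ matches the paper's definition of uniform recurrence, and the final density argument (every neighbourhood of a point of the orbit closure meets the orbit) correctly shows that every subword of $u$ occurs in $w$. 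The only blemish is cosmetic: when you argue that $Z=Y\setminus\bigcup_{n\ge 0}T^{-n}(V)$ is a proper subset you write ``because $u\in V$'', but $u$ need not begin with $v$; what you actually have is $T^{a-1}u\in V$, where $a$ is a starting position of $v$ in $u$, hence $u\in T^{-(a-1)}(V)$ and so $u\notin Z$, which is all that is needed.
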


Using these results, we are able to prove the following result.

\begin{thm}  Let $m$, $p$, and $q$ be natural numbers and let $M$ be a free monoid generated by $m$ elements $x_1,\ldots ,x_m$.
Then there exists a positive integer $N(m,p,q)$, depending on $m$, $p$, and $q$, such that every word on $x_1,\ldots ,x_m$
of length greater than $N(m,p,q)$ has either a strongly $q$-decomposable subword or has a non-trivial subword of the form $t^p$.
\label{Shirshov2}
\end{thm}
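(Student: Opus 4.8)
The plan is to argue by contradiction using a compactness reduction to uniformly recurrent words, exactly as in Pirillo's proof of Theorem \ref{Shirshov}, and then to build a \emph{strongly} $q$-decomposable factor, the new point being that the factor lengths can be balanced once high powers are forbidden. Suppose no integer $N(m,p,q)$ has the asserted property. Then there is an infinite set $S\subseteq M$ of words of unbounded length, each of which contains neither a strongly $q$-decomposable subword nor a nontrivial subword of the form $t^p$. Both of these are hereditary properties: a subword of a word avoiding them again avoids them. By K\"onig's theorem there is a right infinite word $w$ over $\{x_1,\dots,x_m\}$ each of whose subwords is a subword of some element of $S$; since the two forbidden configurations cannot occur in any element of $S$, they cannot occur in $w$. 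Applying Furstenberg's theorem to $w$ produces a right infinite \emph{uniformly recurrent} word $u$ all of whose subwords are subwords of $w$, so $u$ is uniformly recurrent, contains no $t^p$, and contains no strongly $q$-decomposable subword. It remains to contradict the last property.

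To this end I would exploit the lexicographic structure of the subshift generated by $u$. Because $u$ is uniformly recurrent, every sufficiently long window of $u$ meets each fixed factor, and because $u$ avoids $p$-th powers it is not eventually periodic. Fix a large $n$ and let $P$ be the lexicographically largest factor of $u$ of length $n$ (equivalently, the length-$n$ prefix of the lexicographically maximal right infinite word in the subshift of $u$). Consider the successive occurrences of $P$ in $u$ and the associated return words. The maximality of $P$ is what drives the order condition: by selecting $q$ occurrences of $P$ whose tails (the continuations beyond $P$) are lexicographically decreasing---such a choice exists because the infinitely many occurrences of $P$ carry distinct, linearly ordered continuations, so a decreasing subsequence of any prescribed length can be extracted---the block $w=w_1\cdots w_q$ spanning them factors so that $w_1\cdots w_q\succ w_{\sigma(1)}\cdots w_{\sigma(q)}$ for every $\sigma\neq{\rm id}$; this is the $q$-decomposability supplied by Pirillo's argument.

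The genuinely new ingredient is the simultaneous control of the lengths. Here the absence of $p$-th powers gives a lower bound and uniform recurrence an upper bound on the return words of $P$. Indeed, if two occurrences of $P$ begin at distance $g\le n$, the spanned word has period $g$ and length $g+n$, so it contains a $p$-th power as soon as $g+n\ge pg$; avoiding $t^p$ therefore forces every return word to have length greater than $n/(p-1)$. On the other hand, uniform recurrence bounds the gaps between consecutive occurrences of $P$ by some $G=G(n)$, so every return word has length in the band $(n/(p-1),\,G]$. Grouping consecutive return words greedily until the running length first reaches a target $T\ge (q-1)G$ produces factors each of length in $[T,T+G)$; for any $q$ such factors $w_1,\dots,w_q$ one then has $(q-1)\,{\rm length}(w_i)<(q-1)(T+G)\le qT\le {\rm length}(w)$, which is precisely the strong $q$-decomposability length condition. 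Carrying out the order-selection of the previous paragraph at the level of these balanced groups rather than individual return words yields a strongly $q$-decomposable subword of $u$, contradicting the choice of $u$ and establishing the theorem.

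The main obstacle is reconciling these two requirements. Shirshov's original theorem asks only for the lexicographic order condition, and the balancing regrouping is free to ignore it; here the regrouping into near-equal blocks must not destroy $q$-decomposability. The delicate point is thus to perform the extraction of decreasing tails and the greedy length-balancing on the same set of cut points, so that the $q$ chosen balanced blocks are still correctly ordered in the $\succ$-sense. I expect this compatibility---rather than either estimate in isolation---to require the most care, and it is exactly where the hypothesis that $u$ is uniformly recurrent with no high powers is used to full strength, since it is what guarantees a supply of occurrences of $P$ that is simultaneously dense enough (for balance) and rich enough in distinct continuations (for the order).
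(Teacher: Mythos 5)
Your reduction via K\"onig and Furstenberg to a uniformly recurrent right infinite word $u$ avoiding $p$-th powers and strongly $q$-decomposable subwords is exactly the paper's first step, and your length estimates are sound (the overlap/period argument giving return words of length greater than $n/(p-1)$, the upper bound $G$ from uniform recurrence, the greedy grouping into blocks of length in $[T,T+G)$). The gap is in the order-selection step. You justify the existence of $q$ occurrences of $P$ with lexicographically decreasing tails by saying the tails are ``distinct, linearly ordered continuations, so a decreasing subsequence of any prescribed length can be extracted.'' That inference is false: an infinite sequence of distinct elements of a totally ordered set need not contain a decreasing subsequence of length $q$ --- it may be monotonically increasing, as with $1,2,3,\dots$. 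Ruling out the increasing alternative for the tails at occurrences of the maximal factor $P$ is essentially the combinatorial heart of Shirshov's theorem in this formulation (it is where Dilworth- or Erd\H{o}s--Szekeres-type arguments enter in other proofs), and you have not supplied it; you also concede in your final paragraph that making the decreasing-tail extraction compatible with the greedy length-balancing remains open. As written, the proof does not close.

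The paper avoids both difficulties with a simpler device. Since $u$ is uniformly recurrent and not eventually periodic, its subword complexity is unbounded, so for some $N$ there are at least $q$ distinct factors of length $N$; fix $q$ of them, $w_1\succ w_2\succ\cdots\succ w_q$. Uniform recurrence supplies an $L$ such that every window of length $L$ contains all of $w_1,\dots,w_q$, so one can choose occurrences of $w_1,\dots,w_q$ \emph{in that left-to-right order}, starting at positions $j_i\in[2Lq(i-1)+1,\,2Lq(i-1)+L+1]$. Cutting $u$ at these positions gives pieces $u_i$ each beginning with $w_i$; because the $w_i$ are distinct words of the same length arranged in decreasing order, the condition $u_1\cdots u_q\succ u_{\sigma(1)}\cdots u_{\sigma(q)}$ for $\sigma\neq{\rm id}$ is immediate, and the $2Lq$-spacing forces ${\rm length}(u_1\cdots u_q)>(q-1)\,{\rm length}(u_i)$ for every $i$. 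If you wish to keep your single-factor, return-word strategy, you must actually prove the decreasing-tail selection (for instance by showing that a long increasing chain of tails forces a $p$-th power or eventual periodicity); otherwise switch to the paper's multi-factor selection, which delivers the order condition for free.
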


\begin{proof}
Suppose to the contrary that there are arbitrarily long words in $M$ that do not have a subword of the form $t^p$
or a strongly $q$-decomposable subword. Clearly, $q\geq 2$. Then by K\"onig's theorem there is a right infinite word $w$ over $\{x_1,\ldots ,x_m\}$
such that each finite subword $v$ of $w$ has the property that it does not have a subword of the form $t^p$
or a strongly $q$-decomposable subword.  By Furstenberg's theorem, there is a right infinite uniformly recurrent word $u$
such that each subword of $u$ has the property that it does not have a subword of the form $t^p$ or a strongly $q$-decomposable subword.

Let $\omega(n)$ denote the number of subwords of $u$ of length $n$.  Then $\omega(n)$ is not ${\rm O}(1)$,
since otherwise we would have $u$ is eventually periodic and thus it would have a subword of the form $t^p$.
Hence there is some natural number $N$ such that there are at least $q$ distinct subwords of $u$ of length $N$.
Let $w_1\succ w_2\succ \cdots \succ w_q$ be $q$ such words of length $N$.

Since $w_1,\ldots ,w_q$ are uniformly recurrent in $u$, there is some natural number $L$ such that
$w_1,\ldots ,w_q$ occur in the interval $u[i,i+L]$ for each $i$.  Then there is an occurrence of
$w_1$ somewhere in $u[1,1+L]$.  We let $j_1\in \{1,\ldots ,1+L\}$ denote the position of the first
letter of $w_1$ in some occurrence in $u[1,1+L]$.  Then there is an occurrence of $w_2$
somewhere in $u[2Lq+1,2Lq+L+1]$; we let $j_2$ denote its starting position.  Continuing in this
manner, we define natural numbers $j_1,\ldots ,j_q$ such that $j_i\in [2Lq(i-1)+1,2Lq(i-1)+L+1]$ for $1\le i \le q$
and such that $w_i=u[j_i,j_i+N-1]$.

We define $u_i:=u[j_i,j_{i+1}-1]$ for $i\in \{1,\ldots ,q-1\}$ and we define $u_q:=u[j_q,j_q+2Lq]$.
Then by construction,
${\rm length}(u_i) < L(2q+1)$ for all $i$ and $w_i$ is the initial subword of length $N$ of $u_i$ for all $i$.
In particular, $u_1\cdots u_q \succ u_{\sigma(1)}\cdots u_{\sigma(q)}$ for all $\sigma\neq {\rm id}$.
Finally, note that $j_1\leq L$, $j_q\geq 2Lq(q-1)+1$ and hence
\[
{\rm length}(u_1\cdots u_q)=2Lq+j_q-j_1+1\ge L(2q^2-1)+1>(q-1)L(2q+1)> (q-1){\rm length}(u_i)
\]
for $i\in \{1,\ldots ,q\}$, which contradicts the assumption that $u$ does not contain strongly $q$-decomposable subwords.
\end{proof}

\section{Proof of Theorem \ref{thm: main}}

In this section we prove Theorem \ref{thm: main}. Let $D$ be a division ring with center $k$. The proof is done by a series of reductions.
We first prove that if $D$ is left algebraic of bounded degree over a subfield $K$, then every finitely generated $k$-subalgebra
satisfies a standard polynomial identity.  We then use a theorem of Albert \cite{Albert} to prove that $D$ must satisfy a standard identity.
From there, we prove the main theorem by embedding $D$ in a matrix ring and looking at degrees of minimal polynomials.

We now prove the first step in our reduction.

\begin{lem} Let the division algebra $D$ be left algebraic of bounded degree $d$ over a (not necessarily central) subfield $K$.
If $m$ is a natural number, then there is a positive integer $C=C(m,d)$, depending only on $d$ and $m$,
such that every $k$-subalgebra of $D$ that is generated by $m$ elements satisfies the standard polynomial identity of degree $C$
\[
s_C(y_1,\ldots,y_C)=\sum_{\pi\in S_C}\text{\rm sign}(\pi)y_{\pi(1)}\cdots y_{\pi(C)}=0.
\]
\label{locPI}
\end{lem}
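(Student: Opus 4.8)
The plan is to deduce the standard identity from a bound on the dimension of a suitable left $K$-vector space attached to the subalgebra. Fix generators $a_1,\dots,a_m$ of a $k$-subalgebra $R$ of $D$, and let $V$ be the left $K$-subspace of $D$ spanned by all the monomials (words) in $a_1,\dots,a_m$, including the empty word $1$. The crucial structural remark is that, although $K$ is not central, right multiplication $\rho_x\colon y\mapsto yx$ is left $K$-linear for every $x\in D$; hence $V$ is stable under right multiplication by each $a_i$, and $x\mapsto\rho_x$ defines an injective anti-homomorphism $R\to\mathrm{End}_K(V)$ (injective because $1\in V$ and $\rho_x(1)=x$). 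If I can show that $n:=\dim_K V$ is finite and bounded by a function of $m$ and $d$, then $\mathrm{End}_K(V)\cong M_n(K)$ --- here it is essential that $K$ is a field --- so $R$ is anti-isomorphic to a subalgebra of $M_n(K)$. Since $M_n(K)$ satisfies the standard identity $s_{2n}$ by Amitsur--Levitzki, and the standard identity holds in an algebra if and only if it holds in its opposite algebra, $R$ satisfies $s_{2n}$; taking $C=2n$ gives the lemma.

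Thus the entire problem reduces to the combinatorial one of bounding $\dim_K V$. Here the left algebraic hypothesis enters as a rewriting rule: every $x\in D$ satisfies a monic relation $x^d=\beta_{d-1}x^{d-1}+\cdots+\beta_0$ with $\beta_i\in K$, so for any nontrivial word $t$ and any $p\ge d$ one has $t^p=\sum_{j<d}\gamma_j t^j$ with $\gamma_j\in K$. When such a power occurs at the \emph{front} of a word, say $w=t^p w'$, this immediately rewrites $w=\sum_{j<d}\gamma_j\,(t^j w')$ as a left $K$-combination of strictly shorter words; and because the $\gamma_j$ sit on the far left while right multiplication preserves left $K$-combinations, the same reduction propagates to every longer word having $w$ as a prefix. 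Choosing $p=d$ and a suitable $q=q(d)$, and letting $N=N(m,d,q)$ be the bound of Theorem \ref{Shirshov2}, I would argue that a word which is not a left $K$-combination of shorter words can contain neither a $d$-th power nor a strongly $q$-decomposable subword, hence has length at most $N$. Consequently $V$ is spanned by the words of length $\le N$, and $\dim_K V$ is at most the number of such words, which is a function of $m$ and $d$ alone.

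The main obstacle is that the left algebraic reductions carry coefficients in $K$ rather than in the center, so --- unlike the central ($\pm 1$) coefficients produced by a polynomial identity in the usual direction of Shirshov's theorem --- they cannot be transported across an initial segment of a word: a power or a decomposable block occurring in the \emph{interior} of $w$ cannot be reduced by the naive rule, since a scalar $\gamma_j\in K$ would become trapped between two words and no longer represents a left $K$-multiple of a word. This is precisely where the strengthened form of Theorem \ref{Shirshov2} is indispensable. Insisting that the factors of the $q$-decomposition have almost equal length serves two purposes: combined with the uniform recurrence furnished by the theorems of K\"onig and Furstenberg (as exploited in the proof of Theorem \ref{Shirshov2}), it lets me reposition an offending power or decomposable block at the front of a recurring word, and it makes the $q$ equal-length blocks $w_1\succ\cdots\succ w_q$ available as the raw material for a linear-algebra argument, in which the left algebraic relations among these blocks are expected to force a genuine left $K$-linear dependence once $q$ exceeds a bound depending only on $d$. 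Making this dependence explicit --- extracting, from a strongly $q$-decomposable configuration with $q$ large relative to $d$, a left $K$-relation that expresses a word through shorter ones --- is the step I expect to be the most delicate, and it is the heart of the argument.
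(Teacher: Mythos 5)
Your overall framework is the same as the paper's: span a left $K$-space $V$ by the monomials, observe that right multiplication is left $K$-linear and embeds the subalgebra into $\mathrm{End}_K(V)^{\rm op}\cong M_n(K)$, and invoke Amitsur--Levitzki. The problem is that you stop exactly at the two points where the actual work happens, and the workaround you sketch for the first of them is not the one that succeeds. You correctly observe that a relation $t^d=\sum_{j<d}\gamma_j t^j$ with $\gamma_j\in K$ can only be exploited when $t^d$ sits at the \emph{front} of a word, and you propose to use uniform recurrence to ``reposition'' an interior power at the front. That is not how the difficulty is resolved, and it is unclear it can be: recurrence lives inside the proof of the combinatorial theorem, not in the algebraic reduction, and relocating the occurrence changes the word you are trying to rewrite. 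The device that actually works is conjugation: if $w=w_1u^dw_2$, then $D$ is also left algebraic of bounded degree $d$ over the conjugate subfield $w_1^{-1}Kw_1$, so one can write $u^d=w_1^{-1}\alpha_{d-1}w_1u^{d-1}+\cdots+w_1^{-1}\alpha_0w_1$ with $\alpha_j\in K$; substituting into $w_1u^dw_2$ makes the $w_1^{-1}$ cancel and the coefficients $\alpha_j$ emerge on the far left, giving $w\in\sum_{v\prec w}Kv$. This single observation dissolves the ``trapped scalar'' obstacle you identified.

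The second gap is the one you yourself flag as ``the heart of the argument'' and leave open: extracting a left $K$-relation from a decomposable subword. Here the correct choice is $q=d$ (not some large $q(d)$ forcing a dimension count), and the mechanism is an inclusion--exclusion identity: setting $u_S=\sum_{j\in S}u_j$ for $S\subseteq\{1,\ldots,d\}$, one has
\[
\sum_{S\subseteq\{1,\ldots,d\}}(-1)^{d-|S|}u_S^{\,d}=\sum_{\sigma\in S_d}u_{\sigma(1)}\cdots u_{\sigma(d)}.
\]
Each $u_S^{\,d}$ is reduced to lower powers of $u_S$ by left algebraicity over the conjugated subfield as above; the non-identity permutation terms are $\prec u_1\cdots u_d$ by $d$-decomposability; and the lower powers $u_S^{\,k}$ with $k<d$ expand into products $u_{i_1}\cdots u_{i_k}$ which are \emph{shorter} than $u_1\cdots u_d$ precisely because of the strong (almost-equal-length) condition $(d-1)\,\mathrm{length}(u_i)<\mathrm{length}(u_1\cdots u_d)$ --- this, and not any repositioning, is what the strengthened Shirshov theorem buys. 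Since the order is degree-lexicographic, shorter means smaller, so every term lands in $\sum_{v\prec w}Kv$ and minimality of $w$ gives the contradiction. Without the conjugation trick and this identity your argument does not close.
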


\begin{proof} Let $x_1,\ldots ,x_m$ be $m$ elements of $D$.
Consider the $k$-subalgebra $A$ of $D$ generated by $x_1,\ldots ,x_m$.   We put a degree lexicographic order on all words over
$\{x_1,\ldots ,x_m\}$ by declaring that
\[
x_1\succ x_2\succ \cdots \succ x_m.
\]
Let $N=N(m,d,d)$ be a positive integer satisfying the conclusion of the statement of Theorem \ref{Shirshov} in which we take $p=q=d$.
We claim that the left $K$-vector space $V:=KA$ is spanned by all words in $x_1,\ldots ,x_m$ of length at most $N$.

To see this, suppose that the claim is false and let $w$ be the smallest degree lexicographically word with the property that
it is not in the left $K$-span of all words of length at most $N$.

Then $w$ must have length strictly greater than $N$ and so by Theorem \ref{Shirshov2}, either $w$ has a strongly $d$-decomposable
subword or $w$ has a non-trivial subword of the form $u^d$.

If $w$ has a non-trivial subword of the form $u^d$ then we can write $w=w_1 u^d w_2$.
Notice that conjugation by $w_1$ gives an automorphism of $D$ and so $D$ must also be left algebraic of bounded degree $d$
over the subfield $F:=w_1^{-1}Kw_1$.  Notice that the sum
\[
Fu^d+Fu^{d-1}+\cdots +F
\]
is not direct and thus we can find $\alpha_{0},\ldots ,\alpha_{d-1}\in K$ such that
\[
u^d = w_1^{-1}\alpha_{d-1}w_1 u^{d-1}+\cdots + w_1^{-1} \alpha_0 w_1.
\]
Thus
\begin{eqnarray*} w &=&  w_1 u^d w_2 \\
&=&  w_1\left( w_1^{-1}\alpha_{d-1}w_1 u^{d-1}+\cdots + w_1^{-1} \alpha_0 w_1\right) w_2 \\
&=& \alpha_{d-1}w_1 u^{d-1}w_2+\cdots +\alpha_0 w_1w_2 \\
&\in & \sum_{v\prec w} Kv. \end{eqnarray*}
By the minimality of $w$, we get an immediate contradiction.

Similarly, if $w$ has a strongly $d$-decomposable subword, then we can write
\[
w=w_1u_1\cdots u_d w_2
\]
where we have
\[
u_1\cdots u_d \succ u_{\sigma(1)}\cdots u_{\sigma(d)}\]
for all non-identity permutations $\sigma$ in $S_d$ and such that
$(d-1){\rm length}(u_i)<{\rm length}(u_1\cdots u_d)$ for each $i$.
As before, we let $F=w_1^{-1}Kw_1$.  Given a subset $S\subseteq \{1,\ldots ,d\}$, we let $u_S=\sum_{j\in S} u_j$.
Then for each subset $S$ of $\{1,\ldots ,d\}$, we can find $\alpha_{0,S},\ldots ,\alpha_{d-1,S}\in K$ such that
\[
u_S^d = w_1^{-1}\alpha_{d-1}w_1 u_S^{d-1}+\cdots + w_1^{-1} \alpha_0 w_1.
\]
The condition $(d-1){\rm length}(u_i)<{\rm length}(u_1\cdots u_d)$ implies that if $k<d$, then
\[
{\rm length}(u_{i_1}\cdots u_{i_k})<{\rm length}(u_1\cdots u_d)
\]
and hence $u_{i_1}\cdots u_{i_k}\prec u_1\cdots u_d$
for all summands of $u_S^k$, $k<d$.
Notice that
\[
\sum_{S\subseteq \{1,\ldots ,j\}}  (-1)^{d-|S|} u_S^d = u_1\cdots u_d
+ \sum_{\stackrel{\sigma\in S_d}{\sigma\neq {\rm id}}} u_{\sigma(1)}\cdots u_{\sigma(d)},
\]
and so
\begin{eqnarray*}
w &=&  w_1u_1\cdots u_d w_2 \\
&=& -\sum_{\stackrel{\sigma\in S_d}{\sigma\neq {\rm id}}} w_1 u_{\sigma(1)}\cdots u_{\sigma(d)} w_2
+ \sum_{S\subseteq \{1,\ldots ,d\}} \sum_{j=0}^{d-1} (-1)^{d-|S|}\alpha_{j,S}w_1 u_S^{j}w_2 \\
&\in & \sum_{v\prec w} Kv.
\end{eqnarray*}
By the minimality of $w$, we get a contradiction.

Thus $V=KA$ is indeed spanned by all words over $\{x_1,\ldots ,x_m\}$ of length at most $N$.
Consequently, $V$ is at most $(1+m+m^2+\cdots+m^N)$-dimensional as a left $K$-vector space.
The right multiplication $r_a$ by $a\in A$ of the elements of $V$ commutes with the left multiplication by
elements of $K$. Hence $r_a$ acts as a linear operator on the left $K$-vector space $V$ and
$A$ embeds in the opposite algebra ${\rm End}_K(V)^{\rm op}$ of ${\rm End}_K(V)$.
In this way $A$ embeds in the ring of $n\times n$ matrices over $K$ for some $n\le 1+m+m^2+\cdots+m^N$.
Thus taking $C=2(1+m+m^2+\cdots+m^N)$ and invoking the Amitsur-Levitzski theorem \cite[Theorem 1.4.1]{R}, we obtain the desired result.
\end{proof}

\begin{lem}
Let $D$ be a division algebra which is left algebraic of bounded degree over a subfield $K$.
Then every finitely generated division $k$-subalgebra $E$ of $D$ is finite-dimensional over the center of $E$.
\label{fdim division subalgebras}
\end{lem}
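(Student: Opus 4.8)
The plan is to reduce the finite-dimensionality of $E$ to the structure theory of prime PI-rings, with Lemma~\ref{locPI} supplying the polynomial identity. Write $E$ as the division $k$-subalgebra of $D$ generated by finitely many elements $x_1,\ldots,x_m$, and let $A$ denote the \emph{ordinary} $k$-subalgebra of $D$ generated by these same elements, so that $A\subseteq E$. By Lemma~\ref{locPI} the algebra $A$ satisfies the standard identity $s_C$ with $C=C(m,d)$. Since $A$ is a subring of the division ring $D$ it is a domain, and in particular a prime ring; thus $A$ is a prime PI-ring.

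Next I would invoke Posner's theorem, in the form recorded in Rowen \cite{R}: a prime PI-ring is a left and right Ore order in a central simple algebra that is finite-dimensional over its center, and if the ring satisfies $s_C$ then that algebra has dimension at most $\lfloor C/2\rfloor^2$ over its center. Because $A$ is a domain, its classical ring of quotients $Q(A)$ is a division ring, with $[Q(A):Z(Q(A))]\le \lfloor C/2\rfloor^2<\infty$.

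The crucial step is to identify $E$ with $Q(A)$ inside $D$. Every nonzero element of $A$ is invertible in the division ring $D$, so the inclusion $A\hookrightarrow D$ extends uniquely to an embedding $Q(A)\hookrightarrow D$ whose image is $\{ab^{-1}:a,b\in A,\ b\neq 0\}$, a division subring of $D$ containing $A$. On one hand $E\subseteq Q(A)$, since $E$ is the smallest division subring of $D$ containing $A$ and $Q(A)$ is such a subring; on the other hand $Q(A)\subseteq E$, since each $ab^{-1}$ with $a,b\in A\subseteq E$ lies in the division ring $E$. Hence $E=Q(A)$, and therefore $[E:Z(E)]=[Q(A):Z(Q(A))]<\infty$, as required.

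I expect the main obstacle to be precisely this identification $E=Q(A)$: a priori the division subring generated by $A$ could require arbitrarily nested inverses and need not be a ring of fractions at all. What rescues the argument is that $A$ is a prime PI-ring and hence satisfies the Ore condition, so that $\{ab^{-1}\}$ is already closed under the ring operations — this is exactly the part of Posner's theorem we borrow. Once the Ore property is in hand, both the identification $E=Q(A)$ and the dimension bound are immediate, and the finiteness of the degree $C=C(m,d)$ furnished by Lemma~\ref{locPI} guarantees that $E$ is finite-dimensional over its center.
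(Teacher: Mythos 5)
Your proof is correct and follows essentially the same route as the paper: apply Lemma~\ref{locPI} to the ordinary $k$-subalgebra $A$ generated by the same elements, use Posner's theorem to realize $Q(A)$ as a division ring finite-dimensional over its center, and identify $E$ with $Q(A)$ inside $D$. The only cosmetic difference is that the paper uses the central-localization form of Posner's theorem and deduces that $\iota(Q(A))$ is a division ring from its being a central simple algebra without zero divisors, whereas you use the Ore-order form directly; both are standard and equivalent here.
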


\begin{proof}
Let $E$ be generated (as a division $k$-algebra) by
$x_1,\ldots,x_m$, and let $A$ be the $k$-subalgebra of $E$ generated by these elements, i.e., $A$ is the $k$-vector space
spanned by all words over $\{x_1,\ldots ,x_m\}$.
By Lemma \ref{locPI} the algebra $A$ satisfies a standard identity $s_C=0$ of degree $C=C(m,d)$.
Since $A$ is a prime PI-algebra, by the modern form of Posner's theorem, see e.g.
Theorem 6.5, p. 160 of Formanek's part of \cite{DF},
if $Z(A)$ is the center of $A$, then $Q(A)=(Z(A)\setminus \{0\})^{-1}A$ is a finite-dimensional central
simple $(Z(A)\setminus \{0\})^{-1}Z(A)$-algebra
which satisfies exactly the same polynomial identities as $A$.
Since $A$ is a subalgebra of $E$, the natural embedding $\iota:A\to E$ extends to an injection $\iota: Q(A)\to E$.
Since $\iota(Q(A))$ is a subring of the division ring $E$, it is a central simple algebra without zero-divisors, i.e. it is a division
algebra.
As a division $k$-algebra $\iota(Q(A))$ is generated by the same elements $x_1\ldots,x_m$ as the division $k$-algebra $E$.
Hence we obtain that $\iota(Q(A))=E$ and $E$ is isomorphic to $Q(A)$.
Since $Q(A)$ is finite-dimensional over its
center, the same holds for $E$.
\end{proof}

\begin{prop} Let $D$ be a division algebra that is left algebraic of bounded degree $d$ over a maximal subfield $K$.
Then $D$ satisfies the standard polynomial identity of degree $C=C(2,d)$, where
$C=C(2,d)$ is a constant satisfying the conclusion of the statement of Lemma \ref{locPI}.\label{prop: 1}
\end{prop}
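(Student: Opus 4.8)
The plan is to exploit the multilinearity of the standard polynomial $s_C$ together with the two local results already in hand. Since $s_C$ is multilinear, $D$ satisfies $s_C=0$ if and only if $s_C(a_1,\dots,a_C)=0$ for every tuple $a_1,\dots,a_C\in D$. Fixing such a tuple, I would pass to the division $k$-subalgebra $E$ generated by $a_1,\dots,a_C$. Being finitely generated, $E$ is finite-dimensional over its center $Z(E)$ by Lemma \ref{fdim division subalgebras}; as a finite-dimensional division algebra it is central simple over $Z(E)$ of some degree $n$, and $k\subseteq Z(E)$. This reduces everything to the single claim $n\le C/2$: a central simple algebra of degree $n$ satisfies $s_{2n}=0$, and $s_{2n}=0$ forces $s_{C}=0$ as soon as $C\ge 2n$, so $2n\le C$ would give $s_C(a_1,\dots,a_C)=0$.

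To bound $n$ I would collapse the $C$-generated algebra $E$ to a two-generated one, and this is exactly where Albert's theorem \cite{Albert} is used: a finite-dimensional central simple algebra over an infinite field is generated over its center by two elements. (If $Z(E)$ is finite, then $E$ is a finite division ring and hence a field by Wedderburn, so $n=1$; thus I may assume $Z(E)$ infinite.) Writing $E=Z(E)\langle u,v\rangle$, let $E'$ denote the division $k$-subalgebra of $E$ generated by $u$ and $v$. Lemma \ref{locPI} with $m=2$ shows that the $k$-algebra generated by $u,v$ satisfies $s_C=0$; passing to its central localization $E'$ preserves the identities, exactly as in the proof of Lemma \ref{fdim division subalgebras}, so $E'\models s_C=0$ and $E'$ is central simple over $Z(E')$ of degree $n'$ with $2n'\le C$.

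It remains to transport this bound from $E'$ up to $E$. I would note that $Z(E')\subseteq Z(E)$, since any element central in $E'$ commutes with $u,v$ and with the central subfield $Z(E)$, hence with all of $E=Z(E)E'$; and $E=Z(E)\cdot E'$ because $E$ is generated over $Z(E)$ by $u,v\in E'$. The canonical map $Z(E)\otimes_{Z(E')}E'\to E$ is then a nonzero homomorphism from a central simple $Z(E)$-algebra, hence injective, and it is surjective since its image contains both $Z(E)$ and $E'$; thus $E\cong Z(E)\otimes_{Z(E')}E'$. Extending the center by a field leaves the degree unchanged, so $n=\deg_{Z(E)}E=\deg_{Z(E')}E'=n'\le C/2$, which is precisely the bound needed in the first paragraph.

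The step I expect to be the main obstacle is this passage from the $C$-generated $E$ to a two-generated subalgebra without inflating the degree of the identity: Lemma \ref{locPI} pins down two-generated subalgebras with the fixed constant $C=C(2,d)$, so everything hinges on knowing that the degree of an \emph{arbitrary} finite-dimensional division subalgebra is already realized by a suitable pair of elements, which is what Albert's two-generation theorem provides. The accompanying technical care is the degree-preserving identification $E\cong Z(E)\otimes_{Z(E')}E'$ and the harmless degeneration of the finite-center case. It is worth remarking that this route invokes the hypotheses only through Lemmas \ref{locPI} and \ref{fdim division subalgebras} and Albert's theorem, so that the maximality of $K$ is not used to produce the standard identity itself; presumably it enters only in the sharper dimension count carried out afterwards.
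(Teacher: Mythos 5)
Your proposal is correct and follows essentially the same route as the paper: pass to a finitely generated division $k$-subalgebra $E$, which is finite-dimensional over its center by Lemma \ref{fdim division subalgebras}, use Albert's two-generator theorem to bring Lemma \ref{locPI} with $m=2$ into play, and transfer the identity $s_C$ back to $E$ by a central scalar-extension argument. The only cosmetic difference is that the paper argues by contradiction and transfers the multilinear identity directly along the surjection $Z(E)\otimes_k A\to E$, whereas you establish the isomorphism $E\cong Z(E)\otimes_{Z(E')}E'$ and compare degrees via Amitsur--Levitzki in both directions; both are valid.
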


\begin{proof}
Let $k$ be the center of $D$. The $T$-ideal
of the polynomial identities of the division $k$-algebra $D$ is the intersection
of the $T$-ideals of all finitely generated division $k$-subalgebras.
If $D$ does not satisfy the standard identity $s_C=0$,
then there exists a finitely generated division $k$-subalgebra $E$ of $D$
such that $E$ does not satisfy the identity $s_C=0$.
By Lemma \ref{fdim division subalgebras} $E$ is finite-dimensional over its center $Z(E)$.
By a result of Albert \cite{Albert}, $E$
is generated by two elements as a $Z(E)$-algebra. Let $a$ and $b$ be the generators of the $Z(E)$-algebra $E$.
By Lemma \ref{locPI} the $k$-algebra $A$ generated by $a$ and $b$ satisfies the standard identity of degree $C=C(2,d)$.
Since the center $k$ of $D$ is contained in the center $Z(E)$ of $E$ and $a,b\in E$,
we have that $Z(E)A\subseteq E$. Since $E$ is generated as a $Z(E)$-algebra by $a$ and $b$ we conclude that
$E=Z(E)A$.  Thus we have a surjective ring homomorphism
\[
Z(E)\otimes_kA \to E
\]
and since $A$ satisfies the standard identity of degree $C$, the same holds for $Z(E)\otimes_kA$ and $E$, a contradiction.
Thus $D$ satisfies the standard polynomial identity of degree $C$.
\end{proof}

We are now ready to prove our main result.  We have already shown that if a division ring $D$ is left algebraic of bounded degree
over a subfield $K$ then $D$ satisfies a polynomial identity and hence is finite-dimensional over its center.
The only thing that remains is to get the upper bound that is claimed in the statement of Theorem \ref{thm: main}.
This is not difficult if the subfield $K$ is separable over $k$ as one can use a theorem of Brauer and Albert \cite[15.16]{Lam}
which states that if $D$ is a division ring of dimension $n^2$ over its center $k$, then there exist elements $a,b\in D$ such that
$D$ has a $k$-basis $\{a^iba^j ~:~ i,j=0,1,\ldots,n-1\}$.
The inseparable case presents greater difficulty.

\begin{proof}[Proof of Theorem \ref{thm: main}] It is no loss of generality to assume that $K$ is a maximal subfield of $D$.
Let $k$ denote the center of $D$.  By Proposition \ref{prop: 1}, $D$ satisfies a polynomial identity
and hence it is finite-dimensional over $k$ \cite[Theorem 1.5.16]{R}.

Let $n=\sqrt{[D:k]}$.  Then $[D:K]=n$ \cite[Proposition 1, p. 180]{J} and we must show that $d\ge n$.
We note that $D$ has a separable maximal subfield $L=k(x)$ \cite[Theorem 1, p. 181]{J} and $D$ is a faithful simple
left $D\otimes_k L$-module, via the rule
\[
(\alpha \otimes  x^j)(\beta) \mapsto \alpha \beta x^j
\]
for $j\ge 0$ and $\alpha,\beta\in D$.
We let $T\in {\rm End}_K(D)$ be defined by $T(\alpha)=\alpha x$. If $c_0,\ldots ,c_{n-1}\in K$ then
\[
\left(c_0{\rm id}+\cdots +c_{n-1}T^{n-1}\right)(\alpha) = \left( \sum_{i=0}^n c_i\otimes x^i\right)(\alpha).
\]
Since $D$ is a faithful $D\otimes_k L$-module, we see that if
\[
c_0{\rm id}+\cdots +c_{n-1}T^{n-1}=0
\]
then $c_0=\ldots =c_{n-1}=0$ and so the operators
${\rm id}, T, \ldots ,T^{n-1}$ are (left) linearly independent over $K$.

We claim that there exists some $y\in D$ such that the sum
\[
K+KT(y)+\cdots +KT^{n-1}(y)
\]
is direct. To see this, we regard $D$ as a left $K[X]$-module, with action given by $f(X)\cdot \alpha \mapsto f(T)(\alpha)$
for $f(X)\in K[X]$ and $\alpha\in D$.  Let $g(X)$ denote the minimal polynomial of $T$ over $k$.  Then $g(X)$ annihilates $D$ and thus
$D$ is a finitely generated torsion $K[X]$-module. By the fundamental theorem for finitely generated modules
over a principal ideal domain, there exists some $y\in D$ such that
\begin{equation}
\label{eq: 11}\{ f(X)\in K[X]~:~f(X)\cdot y = 0 \} = \{f(X)\in K[X]~:~f(X)\cdot \alpha=0~{\rm for ~all~}\alpha\in D\}.
\end{equation}
If the sum $K+KT(y)+\cdots +KT^{n-1}(y)$ is not direct, then we can find a polynomial $f(X)\in K[X]$ of degree at most $n-1$
such that $f(T)\cdot y=0$.  Thus $f(T)\cdot \alpha=0$ for all $\alpha\in D$ by Equation (\ref{eq: 11}),
which contradicts the fact that  the operators
${\rm id}, T, \ldots ,T^{n-1}$ are (left) linearly independent over $K$.

Hence the sum
\[
K+KT(y)+\cdots +KT^{n-1}(y)=K+Kyx+\cdots + Kyx^{n-1}
\]
is direct.
Let $u=yxy^{-1}$.  Then $K+Ku+\cdots +Ku^{n-1}$ is direct.  But by assumption, every element of $D$ is left algebraic over $K$
of degree at most $d$ and thus $n\le d$.  The result follows.
\end{proof}

\section{Problems}

Unlike the algebraic property, which has been extensively studied in rings, the left algebraic property appears to be new.
Many of the important open problems for algebraic algebras have analogues in which the algebraic property is replaced by being left algebraic.
We pose a few problems.
\begin{enumerate}
\item Is it true that a division ring that is finitely generated over its center and left algebraic over
some subfield is finite-dimensional over its center?
\item Let $k$ be an algebraically closed field and let $A$ be a finitely generated noetherian $k$-algebra
that does not satisfy a polynomial identity.  Is it possible for the quotient division algebra of $A$ to be left algebraic over some subfield?
\item We note that the right algebraic property can be defined analogously.  If a division ring $D$ is left algebraic
over a subfield $K$ must $D$ also be right algebraic over $K$?  We believe that this question has probably been posed before,
but we are unaware of a reference.
\end{enumerate}

\section*{Acknowledgment}
We thank Lance Small, who suggested this problem to the first-named author.

\end{document}